\numberwithin{equation}{section}
\definecolor{Arancio}{cmyk}{0,0.61,0.87,0}
\definecolor{blus}{RGB}{0,102,204}
\let\arXiv\arxiv
\newcommand{\brd}[1]{\mathbb{#1}}
\newcommand{\R}{\brd{R}}
\newcommand{\N}{\brd{N}}
\newcommand{\be}{\begin{equation}}
\newcommand{\ee}{\end{equation}}
\newtheorem{teo}{Theorem}[section]
\newtheorem{Corollary}[teo]{Corollary}
\newtheorem{Lemma}[teo]{Lemma}
\newtheorem{Theorem}[teo]{Theorem}
\newtheorem{Proposition}[teo]{Proposition}
\theoremstyle{definition}
\newtheorem{Definition}[teo]{Definition}
\newtheorem{remark}[teo]{Remark}
\newcommand{\D}{\nabla}
\newcommand{\dive}{\operatorname{div}}
\pgfplotsset{compat=1.18} 
\begin{document}

\subjclass[2020] {35B65, 35J70, 35J75, 35B40, 35B07}
\keywords{Weighted elliptic equations; Degenerate ellipticity; Schauder regularity estimates; Lower dimensional boundaries; Axially symmetric solutions; Fractional Laplacian.}

\title[Elliptic equations degenerating on lower dimensional manifolds]
{Remarks on elliptic equations degenerating on lower dimensional manifolds}

\author{Gabriele Cora, Gabriele Fioravanti, Stefano Vita}

\address{Gabriele Cora, D\'epartement de Math\'ematique, Universit\'e Libre de Bruxelles, Boulevard du Triomphe 155, 1050, Brussels, Belgium}
\email{gabriele.cora@ulb.be}

\address{Gabriele Fioravanti, Dipartimento di Matematica "G. Peano", Universit\`a degli Studi di Torino, Via Carlo Alberto 10, 10124, Torino, Italy}
\email{gabriele.fioravanti@unito.it}

\address{Stefano Vita, Dipartimento di Matematica "F. Casorati", Universit\`a di Pavia, Via Ferrata 5, 27100, Pavia, Italy}
\email{stefano.vita@unipv.it}

\begin{abstract}
The paper continues the analysis started in \cite{CorFioVit25,Fio24} on the local regularity theory for elliptic equations having coefficients which are degenerate or singular on some lower dimensional manifold. The model operator is given by $L_au(z)=\mathrm{div}(|y|^a\nabla u)(z)$, where $z=(x,y)\in\mathbb R^{d-n}\times\mathbb R^n$, $2\leq n\leq d$ are two integers and $a\in\mathbb R$. The weight term is degenerate/singular on the (possibly very) thin characteristic manifold $\Sigma_0=\{|y|=0\}$ of dimension $0\leq d-n\leq d-2$. Whenever $a+n>0$, we prove smoothness of the axially symmetric $L_a$-harmonic functions. In the mid-range $a+n\in(0,2)$, we deal with regularity estimates for solutions with inhomogeneous conormal boundary conditions prescribed at $\Sigma_0$, and we establish the connection with fractional Laplacians on very thin flat manifolds via Dirichlet-to-Neumann maps, as a higher codimensional analogue of the extension theory developed by Caffarelli and Silvestre. Finally, whenever $a+n<2$ we complement the study in \cite{Fio24}, providing some regularity estimates for solutions having a homogeneous Dirichlet boundary condition prescribed at $\Sigma_0$ by a boundary Harnack type principle.
\end{abstract}

\maketitle

\section{Introduction}
In this paper, continuing the analysis in \cite{CorFioVit25,Fio24}, we investigate some further regularity features of the weak solutions to second order elliptic equations in divergence form which are uniformly elliptic
far from a characteristic manifold of low dimension. The model operator is given by
\begin{equation*}\label{generalPDE}
L_au(z)=\mathrm{div}(|y|^a\nabla u)(z).
\end{equation*}
Here $z=(x,y)\in\R^{d-n}\times\R^n$, $2\leq n\leq d$ are two integers, and $a\in\R$. The weight term is a power of the distance to the characteristic flat manifold of low dimension $0\leq d-n\leq d-2$
$$\Sigma_0=\{z=(x,y)\in\R^d \, \mid \, |y|=0\};$$
that is, $|y|=\mathrm{dist}(z,\Sigma_0)$.

\subsection*{Smoothness of axially symmetric solutions}
The local regularity of weak solutions of elliptic equations involving $L_a$ is strongly influenced by the presence of the weight term. Even in the case of $L_a$-harmonic functions, there is a natural threshold for regularity given by the combined parameters power-codimenson $(a,n)$. This can be seen at the level of Liouville type theorems, which classify the entire solutions having some polynomial growth, see \cite[Theorem 1.4]{CorFioVit25} and \cite[Theorem 1.4]{Fio24}. However, when the weight is locally integrable, i.e. $a+n>0$, $L_a$-harmonic functions with a homogeneous conormal boundary condition at $\Sigma_0$ and which additionally exhibit a particular symmetry, are actually smooth.

The class of weak solutions we are referring to are elements of the weighed Sobolev space $H^{1,a}(B_1):=H^1(B_1,|y|^adz)$ which satisfy
\begin{equation}\label{weakconormal}
\int_{B_1}|y|^a\nabla u\cdot\nabla\phi=0\qquad\mathrm{for \ every \ }\phi\in C^{\infty}_c(B_1).
\end{equation}
We refer to \cite{CorFioVit25} for a complete treatment of the functional framework of the problem. The equation is satisfied \emph{across} the thin manifold $\Sigma_0$, and this implies a formal homogeneous conormal condition at $\Sigma_0$
\begin{equation}\label{conormalweighted}
\lim_{|y|\to0}|y|^{a+n-1}\nabla u\cdot\frac{y}{|y|}=0,
\end{equation}
see Remark \ref{rem:conormal} for the precise meaning of the above expression. 

Then, the additional geometric condition which provides smoothness is the axial symmetry with respect to $\Sigma_0$; that is, solutions which are radial-in-$y$
\begin{equation*}
    u(x,y)=u(x,|y|).
\end{equation*}
The latter is proved in Theorem \ref{axiallysmooth}.

\subsection*{The inhomogeneous conormal problem}

The local weighted $H^{1,a}$-capacity of the thin manifold $\Sigma_0$ is key to understanding what kind of solutions one may face. When $a+n\geq2$ (superdegenerate setting) the weighted capacity is zero, and the homogeneous conormal condition in \eqref{conormalweighted} is naturally enjoyed by any weak solution. On the contrary, when $a+n\leq0$ (supersingular setting) the weighted capacity is infinite and solutions must satisfy a homogeneous Dirichlet condition at $\Sigma_0$. In the mid-range $a+n\in(0,2)$, the positive (and locally finite) weighted capacity of the thin set $\Sigma_0$ enables the imposition of inhomogeneous Dirichlet and conormal boundary conditions. Here we deal with the latter; that is, solutions to
 \begin{equation*}
    \begin{cases}
        -\dive(|y|^a \D u) = 0, & \text{ in } B_1,\\
        \displaystyle-\lim_{|y|\to 0}
        |y|^{a+n-1} \D u\cdot \frac{y}{|y|} = g(x), & \text{ on }\Sigma_0 \cap B_1,
    \end{cases}
    \end{equation*}
which are well defined (see Definition \ref{Def:inhomogeneous.conormal}) as elements of $H^{1,a}(B_1)$ satisfying
    \[
    \int_{B_1}|y|^a \D u \cdot \D \phi dz = \omega_n\int_{\Sigma_0 \cap B_1} g(x) \phi(x,0) dx, \qquad \text{for every }\phi\in C_c^\infty(B_1),
    \]
where $\omega_n = |\mathbb{S}^{n-1}|$. Then, we prove some regularity results for solutions of the above problem, as a corollary of our main theorems in \cite{CorFioVit25}, together with the regularity of axially symmetric solutions, see Proposition \ref{Prop:inhomogeneous}.

\subsection*{Higher codimensional extensions of fractional Laplacians} 
Motivated by \cite{CafSil07}, one might wonder whether it is possible to extend functions $u$ defined in $\R^{d-n}$ having a well-defined $s$-fractional Laplacian ($s\in(0,1)$) to the whole of $\R^d$, introducing $n$ additional variables.
Under the assumption $d-n>2s$, which allows for the definition of suitable energy spaces, in Section \ref{sec:10} we show that such an extension is given by the convolution $u * P$ with the Poisson-type kernel 
\[P(x,y)=P(|x|,|y|)=\frac{\Gamma(\frac{d-n + 2s}{2})}{\pi^{\frac{d-n}{2}}\Gamma(s)}\frac{|y|^{2s}}{(|x|^2+|y|^2)^{\frac{d-n+2s}{2}}},\qquad (x,y)\in\R^{d-n}\times\R^n.
\]
The extension is a radial-in-$y$ solution (axisymmetric with respect to $\Sigma_0=\R^{d-n}$) to 
$$\begin{cases}
-\mathrm{div}(|y|^a\nabla u)=0 &\mathrm{in \ }\R^d\setminus\Sigma_0\\
\displaystyle-\lim_{|y|\to0}|y|^{a+n-1}\nabla u\cdot\frac{y}{|y|}=d_{a,n}(-\Delta)^su(x,0) &\mathrm{on \ } \Sigma_0,
\end{cases}$$
where $a+n=2-2s\in(0,2)$, and $d_{a,n}$ is an explicit positive constant. As can be seen through the change of variables $|y| = r$, we exactly recover the classical Caffarelli-Silvestre extension. It would be interesting to study conformal fractional Laplacians and their higher codimensional extension theories in the setting of curved thin manifolds, following the approach of \cite{ChaGon11}.

\subsection*{The homogeneous Dirichlet problem}
Whenever $a+n<2$, one can provide regularity results for solutions to the homogeneous Dirichlet problem
\begin{equation}\label{eq:dirichlet}
\begin{cases}
-\mathrm{div}(|y|^a\nabla u)=0 &\mathrm{in \ } B_1\setminus\Sigma_0\\
u=0 &\mathrm{on \ } B_1\cap\Sigma_0.
\end{cases}
\end{equation}
The problem above has connections with the work of Nguyen on harmonic maps with prescribed singularities in general relativity \cite{Ngu11}. Moreover, it has also profound connections with the works of David-Feneuil-Mayboroda on the Dirichlet problem on lower dimensional boundaries \cite{DavFenMay19,DavFenMay21}, see also \cite{DaiFenMay23,DavMay22}.

In the same spirit of \cite{SirTerVit21b}, as $a+n<2$ one can provide regularity results for solutions $u$ to \eqref{eq:dirichlet} by considering the ratio $w=u/u_0$ between $u$ and the characteristic Dirichlet solution
\[
u_0(y)=|y|^{2-a-n},
\]
which solves again \eqref{eq:dirichlet}. The ratio $w$ is a solution of a degenerate problem $-\mathrm{div}(|y|^b\nabla w)=0$ with exponent $b=4-a-2n$ which lies in the superdegenerate range since $b+n>2$. Hence, $C^{0,\alpha}$ and $C^{1,\alpha}$ estimates for $w$ are implied by the main theorems in \cite{CorFioVit25}, see Corollary \ref{C:Dirichlet}. This improves some results in \cite{Ngu11}, and also slightly improves some regularity results in \cite{Fio24}, ensuring the sharp $C^{2-a-n}$ regularity of solutions under additional requirements on the codimension $n$. For instance, in the case $a+n=1$ and whenever $n\geq 4$, solutions to \eqref{eq:dirichlet} are provided to be Lipschitz continuous, see Remark \ref{rem:DFM}.

\subsection*{Notation}
We establish the notation that will be used throughout the paper.

\begin{itemize}[left=0pt]

    \item[$\cdot$] Let $2\leq n\leq d$ be two integers and consider the coordinates $z=(x,y)\in\R^{d-n}\times\R^n$.

    \item [$\cdot$] For $m \in \N$ we denote by $B_R^{m}(\zeta)$ the open $m$-dimensional ball of radius $R>0$ centered at $\zeta \in \R^m$. To ease the notation, we simply write $B_R^m= B^m_R(0)$ when $\zeta = 0$, and $B_R(\zeta)= B^d_R(\zeta)$, when the dimension $m=d$. In particular, $B_R = B^d_R(0)$.


\end{itemize}

\section{Smoothness of axially symmetric solutions}

In this section, we prove that axially symmetric - with respect to $\Sigma_0$ - solutions (radial-in-$y$) are locally smooth, by establishing a one-to-one correspondence with solutions to an equation that degenerates on a hyperplane.

\begin{Definition}\label{def:weak:sol:hom:0}
Let $a+n>0$. We say that $u$ is $L_a$-harmonic in $B_1$ across $\Sigma_0$, i.e. is a weak solution to
\begin{equation}\label{eq:weak:sol:conormal:0}
-\dive(|y|^a \nabla u)= 0, \quad \text{in }B_1
\end{equation}
if $u\in H^{1,a}(B_1)$ and satisfies
\begin{equation}\label{eq:weak:sol:integral}
\int_{B_1}|y|^a A\D u\cdot \D\phi = 0,
\end{equation}
for every $\phi\in C_c^\infty(B_1).$
\end{Definition}

\begin{remark}\label{rem:conormal}
  We highlight that our notion of weak solution implies a \emph{weighted conormal boundary condition} on the lower-dimensional set $\Sigma_0$. This is a consequence of the fact that the weak formulation \eqref{eq:weak:sol:integral} involves test functions whose support may touch the thin manifold $\Sigma_0$. We refer to these solutions as \emph{solutions across} $\Sigma_0$. Let us fix $\phi \in C_c^\infty(B_1)$. Multiplying \eqref{eq:weak:sol:conormal:0} by $\phi$ and integrating by parts in $B_1 \setminus \Sigma_\varepsilon$ (where $\Sigma_\varepsilon=\{|y|\leq\varepsilon\}$), we obtain
\[
0 = \int_{B_1 \setminus \Sigma_{\varepsilon}} -\dive(|y|^a\nabla u) \phi \, dz = \int_{B_1 \setminus \Sigma_{\varepsilon}} |y|^a\nabla u \cdot \nabla \phi \, dz - \int_{\partial \Sigma_\varepsilon \cap B_1} |y|^a \phi \nabla u \cdot \nu \, d\sigma.
\]  
Formally, taking the limit as $\varepsilon \downarrow 0$, we find
\[
\int_{B_1} |y|^a \nabla u \cdot \nabla \phi \, dz  = \int_{B_1^{d-n}} \mathcal{D}_u(x) \phi(x, 0) \, dx,
\]  
where
\[
\mathcal{D}_u(x) := -\lim_{\varepsilon \downarrow 0} \varepsilon^{1-n} \int_{\partial B_{\varepsilon}^n} |y|^{a+n-2} \nabla u \cdot y \, d\sigma(y).
\]  
Hence, in the weak formulation of \eqref{eq:weak:sol:conormal:0}, we are assuming that $\mathcal{D}_u = 0$. It is worth noting that if $a + n \geq 2$, due to the zero weighted capacity of the thin manifold, only solutions to \eqref{eq:weak:sol:conormal:0} make sense, since one could not impose any different boundary condition at $\Sigma_0$. Conversely, when $a + n \in (0, 2)$, the weighted capacity of $\Sigma_0$ is positive and locally finite, allowing the imposition of both inhomogeneous Dirichlet and inhomogeneous conormal boundary conditions, respectively $u=f$ and $\mathcal{D}_u = g $ on $\Sigma_0$.

\end{remark}

\begin{Definition}
    We say that a function $u:B_1\subset\R^d\to \R$ is axially symmetric in $y$ (i.e. with respect to $\Sigma_0$) if there exists a function $\tilde{u}:B_1^+\subset \R^{d-n+1}\to \R$ such that $u(x,y)=\tilde{u}(x,|y|)$, where $B_1^+ :=B_1^{d-n+1}\cap\{(x,r)\in \R^{d-n}\times \R \mid r>0\}$ denotes the unitary upper half ball in $\R^{d-n+1}$.
\end{Definition}

\begin{Lemma}\label{L:axially:1-1}
    Let $a+n>0$ and let $u$ be an axially symmetric in $y$ function.
    Then, $u$ is $L_a$-harmonic in $B_1$ across $\Sigma_0$ (weak solutions of \eqref{eq:weak:sol:conormal:0}) if and only if the function $\tilde{u}(x,r)$ is a weak solution to 
\begin{equation}\label{eq:symm2}
\begin{cases}
-\mathrm{div}(r^{a+n-1} \nabla\tilde{u})=0, &\mathrm{in \ } B_1^+,\\
\displaystyle\lim_{r\to 0}r^{a+n-1}\partial_r \tilde{u} = 0, &  \mathrm{on \ } B_1\cap\{r=0\},
\end{cases}
\end{equation}
in the sense that $\tilde u \in H^{1}(B_1^+,r^{a+n-1}dxdr)$ and 
\[
\int_{B_1^+}r^{a+n-1}\D \tilde{u} \cdot \D \phi\, dxdr =0,\qquad \text{for every } \phi \in C_c^\infty(B_1)\,.
\]
\end{Lemma}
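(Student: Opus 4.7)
The plan is to convert both sides of the equation via polar coordinates $y = r\theta$ with $r = |y| \in [0,\infty)$ and $\theta \in \mathbb{S}^{n-1}$, so that $dz = r^{n-1}\,dr\,d\sigma(\theta)\,dx$. Under this change of variables one obtains at once the isometry
\[
\|u\|_{H^{1,a}(B_1)}^{2} \;=\; \omega_n\,\|\tilde u\|_{H^{1}(B_1^+,\, r^{a+n-1}dxdr)}^{2},
\]
which transfers Sobolev regularity back and forth. The computational core is the identity
\[
\int_{B_1} |y|^a \D u \cdot \D \phi\,dz \;=\; \omega_n \int_{B_1^+} r^{a+n-1}\, \D \tilde u \cdot \D \bar\phi\,dxdr,\qquad \bar\phi(x,r) := \fint_{\mathbb{S}^{n-1}} \phi(x, r\theta)\, d\sigma(\theta),
\]
valid for every $\phi \in C_c^\infty(B_1^d)$ whenever $u$ is axially symmetric. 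I would derive it by decomposing $\D_y \phi$ into its radial and tangential components on each sphere $\{|y|=r\}$, noting that $\D u$ has no tangential component (axial symmetry), and integrating the surviving radial contribution over $\mathbb{S}^{n-1}$, which produces the average $\bar\phi$.

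For the backward implication, given $\tilde u$ solving \eqref{eq:symm2} and any $\phi \in C_c^\infty(B_1^d)$, the spherical average $\bar\phi$ is automatically even in $r$ via the symmetry $\theta \mapsto -\theta$. A Taylor expansion of $\phi$ around $y=0$ combined with the fact that $\fint_{\mathbb{S}^{n-1}} \theta^\alpha\,d\sigma(\theta)=0$ whenever some $\alpha_i$ is odd shows that $\bar\phi$ is a smooth function of $(x,r^2)$ and therefore extends to an element of $C_c^\infty(B_1^{d-n+1})$. It is thus admissible as a test function for \eqref{eq:symm2}, and the above identity gives $\int_{B_1} |y|^a \D u \cdot \D \phi\,dz = 0$.

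For the forward implication, the difficulty is that a generic $\psi \in C_c^\infty(B_1^{d-n+1})$ does not admit a smooth axially symmetric lift $\psi(x,|y|)$: this naive lift has a cusp at $\Sigma_0$ when $\partial_r\psi(x,0)\neq 0$. I circumvent this by splitting $\psi = \psi_e + \psi_o$ into its even and odd parts in $r$. The even piece $\psi_e$ is a smooth function of $(x,r^2)$, so its lift $\Phi_e(x,y) := \psi_e(x,|y|)$ belongs to $C_c^\infty(B_1^d)$, is axially symmetric with $\overline{\Phi_e} = \psi_e$, and plugged into \eqref{eq:weak:sol:integral} via the main identity it yields the weak equation against $\psi_e$. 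For the odd piece one has $\psi_o(x,0)=0$ and $|\psi_o(x,r)| \lesssim r$; introducing a cutoff $\eta_\varepsilon(r)$ equal to $0$ for $r \le \varepsilon$, to $1$ for $r \ge 2\varepsilon$, with $|\eta_\varepsilon'|\le C/\varepsilon$, the lift $\Phi_\varepsilon(x,y) := (\psi_o\eta_\varepsilon)(x,|y|)$ is smooth (supported away from $\Sigma_0$) and compactly supported in $B_1^d$. Letting $\varepsilon \downarrow 0$ in the identity applied to $\Phi_\varepsilon$, the dominant term converges by dominated convergence, while the contribution carried by $\D\eta_\varepsilon$ is controlled via Cauchy--Schwarz together with the estimate
\[
\int_{\{\varepsilon < r < 2\varepsilon\}} r^{a+n-1} (\psi_o \eta_\varepsilon')^{2}\,dxdr \;\lesssim\; \varepsilon^{a+n},
\]
which vanishes precisely because $a+n > 0$. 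Summing the even and odd contributions gives the weak equation for $\tilde u$ against $\psi$.

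The principal technical obstacle is exactly the absence of a canonical smooth axially symmetric lift for an arbitrary test function on the half-ball; the even/odd decomposition plus the cutoff approximation is the key trick, and the assumption $a+n>0$ enters quantitatively through the local integrability of $r^{a+n-1}$, which is what makes the above error estimate close.
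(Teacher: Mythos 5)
Your argument is correct and follows the same route as the paper: pass to polar coordinates in $y$, observe the norm identity $\|u\|_{H^{1,a}(B_1)}^2=\omega_n\|\tilde u\|^2_{H^1(B_1^+,r^{a+n-1}dxdr)}$, and reduce both implications to the identity $\int_{B_1}|y|^a\D u\cdot\D\phi\,dz=\omega_n\int_{B_1^+}r^{a+n-1}\D\tilde u\cdot\D\bar\phi\,dxdr$, with $\bar\phi$ the spherical average of $\phi$ in $y$; your backward implication is exactly the paper's. The only real divergence is in the forward implication: the paper simply tests the $d$-dimensional weak formulation with the lift $\tilde\phi(x,|y|)$, which is in general only Lipschitz at $\Sigma_0$ (admissible by density of $C^\infty_c$ in the weighted space, since $a+n>0$ makes $|y|^a$ locally integrable, but this is left implicit), whereas you restore smoothness of the test function by splitting $\tilde\phi$ into its even part (a smooth function of $(x,|y|^2)$) and its odd part, the latter handled with a cutoff whose error term is $O(\varepsilon^{a+n})$; this is slightly longer but more self-contained, and it correctly isolates where the hypothesis $a+n>0$ is used.
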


\begin{proof}
First, we notice that, by taking the spherical-in-$y$ change of variable, one has
\begin{align*}
    \int_{B_1^+} r^{a+n-1} (\tilde{u}^2 + |\D \tilde{u}|^2)dxdr &= 
c_n\int_{\mathbb{S}^{n-1}} \int_{B_1^+} r^{a+n-1} (\tilde{u}^2 + |\D \tilde{u}|^2)dxdr d\sigma\\
&= c_n
\int_{B_1} |y|^{a} (u^2 + |\D u|^2)dxdy,
\end{align*}
where $c_n = |\mathbb{S}^{n-1}|^{-1}$. Then, $u \in H^{1,a}(B_1)$ if and only if $\tilde{u} \in  H^{1}(B_1^+,r^{a+n-1}dxdr)$.

Next, let us suppose that $u$ is a weak solution to \eqref{eq:weak:sol:conormal:0}. Fix $\tilde \phi=\tilde \phi(x,r) \in C_c^\infty(B_1^{d-n+1})$ and call $\phi(x,y) := \tilde \phi(x,|y|)$. Then,
\begin{align*}
    \int_{B_1^+} r^{a+n-1} \D \tilde u\cdot \D \tilde\phi\, dxdr &= c_n\int_{\mathbb{S}^{n-1}}\int_{B_1^+} r^{a+n-1} \D \tilde u\cdot \D \tilde\phi\, dxdr d\sigma\\
    &= c_n\int_{B_1}|y|^a \D u \cdot \D  \phi\, dxdy = 0\,.
\end{align*}
Hence, $\tilde{u}$ is a weak solution to \eqref{eq:symm2}.

Instead, let us suppose that $\tilde u$ is a weak solution to \eqref{eq:symm2}, fix $\phi=\phi(x,y) \in C_c^\infty(B_1)$ and define $\tilde{\phi}(x,r):= \int_{\mathbb{S}^{n-1}}\phi(x,r\sigma)d\sigma$. Then, one has
\begin{align*}
    \int_{B_1}|y|^{a}\D u\cdot\D \phi\, dxdy &= \int_{B_1^+} \int_{\mathbb{S}^{n-1}}r^{a+n+1}(\D_x {\tilde u},\partial_r \tilde u) 
    \cdot
    (\D_x \phi,\partial_r \phi )\, d\sigma dxdr \\
    &= \int_{B_1^+} r^{a+n+1}(\D_x {\tilde u},\partial_r {\tilde u}) 
    \cdot
    \Big(\int_{\mathbb{S}^{n-1}}(\D_x \phi,\partial_r \phi ) d\sigma\Big) dxdr \\
    &= \int_{B_1^+} r^{a+n-1} (\D_x {\tilde u},\partial_r {\tilde u}) 
    \cdot (\D_x \tilde \phi,\partial_r \tilde \phi ) dxdr = 0.
\end{align*}
    Therefore, $u$ is a weak solution to \eqref{eq:weak:sol:conormal:0}. The proof is complete.
\end{proof}

\begin{Theorem}[Smoothness of axially symmetric solutions]\label{axiallysmooth}
Let $a+n >0$ and let $u$ be axially symmetric in $y$ and $L_a$-harmonic in $B_1$ across $\Sigma_0$ (weak solution to \eqref{eq:weak:sol:conormal:0}). Then, $u \in C^\infty_{\rm loc}(B_{1})$.
\end{Theorem}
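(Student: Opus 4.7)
The plan is to reduce to a codimension-one weighted problem via Lemma \ref{L:axially:1-1}, then to combine the Schauder theory of \cite{CorFioVit25} with the axial symmetry of $u$ to promote smoothness of the associated profile $\tilde u$ to smoothness of $u$ itself. Writing $a':=a+n-1$ and $n':=1$, the function $\tilde u$ is a weak solution in $B_1^+\subset\R^{d-n+1}$ of $-\mathrm{div}(r^{a'}\nabla\tilde u)=0$ with homogeneous conormal condition on $\{r=0\}$, and the hypothesis $a+n>0$ reads $a'+n'>0$, which is exactly the setting treated in \cite{CorFioVit25} in codimension one.

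The first step is to show $\tilde u\in C^\infty(\overline{B_\rho^+})$ for every $\rho<1$, via the higher order boundary Schauder estimates of \cite{CorFioVit25} combined with a standard bootstrap: away from $\{r=0\}$ the operator is smooth and interior regularity is classical; along $\{r=0\}$, tangential derivatives $\partial_x^\alpha \tilde u$ solve the same equation with the same conormal condition, so iterating the available $C^{k,\alpha}$ estimates gives smoothness in the tangential variables; normal derivatives of any order are then recovered from the equation in non-divergence form.

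The second step is to promote this to smoothness of $u(x,y)=\tilde u(x,|y|)$ on $B_1$. Since $|y|$ is non-smooth at the origin, this amounts to showing that the Taylor expansion of $\tilde u(x,\cdot)$ at $r=0$ contains only even powers of $r$. Writing the equation in non-divergence form
\[
\partial_r^2\tilde u+\frac{a+n-1}{r}\,\partial_r\tilde u+\Delta_x\tilde u=0,\qquad r>0,
\]
and substituting $\tilde u(x,r)=\sum_{k\geq 0}u_k(x)\,r^k$, the matching of powers of $r$ produces the constraint $(a+n-1)\,u_1=0$ at order $r^{-1}$ together with the recursion
\[
k(k+a+n-2)\,u_k(x)+\Delta_x u_{k-2}(x)=0,\qquad k\geq 2.
\]
A short case analysis forces $u_1\equiv 0$ for every $a+n>0$: the $r^{-1}$ constraint handles $a+n\neq 1$ directly, while for $a+n=1$ the conormal condition degenerates to $\partial_r\tilde u(x,0)=0$. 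Since $k(k+a+n-2)>0$ for $k\geq 3$, an induction yields $u_k\equiv 0$ for every odd $k$. By the classical result on smooth even functions of one variable, $\tilde u(x,r)=g(x,r^2)$ for some smooth $g$, and hence $u(x,y)=g(x,|y|^2)\in C^\infty_{\mathrm{loc}}(B_1)$.

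The main obstacle is the first step: one needs to extract boundary Schauder estimates of arbitrarily high order for the codimension-one degenerate equation up to $\{r=0\}$ in a form strong enough to justify the classical Taylor expansions used in the second step. Once that regularity is in place, the remaining ingredients are essentially algebraic.
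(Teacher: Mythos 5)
Your proposal is correct in outline but takes a genuinely different route from the paper for the decisive step, namely passing from regularity of the profile $\tilde u(x,r)$ to regularity of $u(x,y)=\tilde u(x,|y|)$ across $\Sigma_0$. Both arguments begin the same way: Lemma \ref{L:axially:1-1} plus the codimension-one boundary theory give $\tilde u\in C^\infty_{\rm loc}(\overline{B_\rho^+})$. For the transfer you use parity of the Taylor expansion in $r$ together with Whitney's theorem on even functions (in its parametrized form), so that $\tilde u(x,r)=g(x,r^2)$ and $u(x,y)=g(x,|y|^2)$; your recursion $k(k+a+n-2)u_k+\Delta_x u_{k-2}=0$ and the constraint $(a+n-1)u_1=0$ are computed correctly, and since $a+n>0$ excludes all resonances, the induction killing the odd coefficients is sound (for $a+n<1$ one can alternatively read $u_1=0$ directly off the conormal condition, since $r^{a+n-1}\to\infty$). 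The paper instead bootstraps entirely in $\R^d$: it shows $\nabla u\in L^\infty$, rewrites the equation in non-divergence form as $-\Delta u=av$ with $v=\nabla u\cdot y/|y|^2$, observes via \cite[Lemma 7.3]{SirTerVit21a} and Lemma \ref{L:axially:1-1} that $v$ is again an axially symmetric solution of the same type with exponent $a+2$, and iterates classical Schauder theory to gain two derivatives per step. Your route is more direct once full boundary smoothness of $\tilde u$ is granted; the paper's route needs only $C^{0,\alpha}$/$C^{1,\alpha}$ inputs from \cite{CorFioVit25} at each stage, at the price of the iteration.

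Two points should be repaired. First, \cite{CorFioVit25} is stated for codimension $n\geq 2$ and does not cover the reduced problem with $n'=1$; the higher-order boundary Schauder theory you invoke in your first step is the codimension-one theory of \cite{SirTerVit21a,TerTorVit24a}, which is exactly what the paper cites to obtain $\tilde u\in C^\infty_{\rm loc}(\overline{B_\rho^+})$. Second, the phrase ``normal derivatives are recovered from the non-divergence form'' glosses over the singular coefficient $(a+n-1)/r$: to control $\partial_r^2\tilde u$ up to $\{r=0\}$ one needs regularity of $r^{-1}\partial_r\tilde u$, which is precisely the content of \cite[Lemma 7.3]{SirTerVit21a} (it solves the analogous problem with weight $r^{a+n+1}$). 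With the correct references in place for that first step, the rest of your argument goes through.
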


\begin{proof}

By definition $u(x,y)=\tilde{u}(x,|y|)$, for some function $\tilde{u}:B_1^+\subset \R^{d-n+1}\to \R$ and by using Lemma \ref{L:axially:1-1} one has that $\tilde{u}=\tilde{u}(x,r)$ is a weak solution to \eqref{eq:symm2}. By using the regularity theory of weighted equations degenerating on a hyperplane (see \cite{SirTerVit21a,TerTorVit24a}), noting that $a+n-1>-1$, we get that $\tilde{u} \in C^{\infty}_{\rm loc}(\overline{B_{r}^+})$, for every $r \in (0,1)$.

Next, by using \cite[Lemma 7.3]{SirTerVit21a}, the function $\tilde v(x,r):= r^{-1}\partial_r \tilde u(x,r)$ is a weak solution to 
\begin{equation*}
\begin{cases}
-\mathrm{div}(r^{a+n+1}\nabla \tilde v)=0, &\mathrm{in \ } B_{3/4}^+,\\
\displaystyle\lim_{r\to 0}r^{a+n+1}\partial_r \tilde v = 0, &  \mathrm{on \ } B_{3/4}\cap\{r=0\}.
\end{cases}
\end{equation*}
Applying again Lemma \ref{L:axially:1-1}, the function 
\[
{v}(x,y):= \tilde{v}(x,|y|)= \frac{\D u(x,y)\cdot y}{|y|^2},
\]
is a weak solution to 
\begin{equation}\label{eq:axially:derivative}
-\mathrm{div}(|y|^{a+2}\nabla {v})=0, \quad\mathrm{in \ } B_{3/4}.
\end{equation}
Next, we prove that $\D u \in L^\infty(B_{1/2})$. For every $j=1,\dots,d-n$, \cite[Proposition 5.1]{CorFioVit25} ensures that $\partial_{x_j} u$ is a weak solution to \eqref{eq:weak:sol:conormal:0}. By using \cite[Theorem 1.1]{CorFioVit25} we get that $\partial_{x_j} u \in C^{0,\alpha}(B_{1/2})$ and then $\partial_{x_j} u \in L^\infty(B_{1/2})$. On  the other hand, since $u$ is an axially symmetric in $y$ function, one has that
$$\D_y u(x,y) = \partial_r \tilde{u}(x,|y|)\frac{y}{|y|} \in L^\infty(B_{1/2}),$$
so $\D u \in L^\infty(B_{1/2})$. Hence, we have proved that $u \in C^{0,1}(B_{1/2})\subset H^1(B_{1/2})$ and we 
can rewrite equation \eqref{eq:weak:sol:conormal:0} as 
\begin{equation}\label{eq:axially:Delta:u}
    -\Delta u = a {v},\quad \text{ in } B_{1/2}.
\end{equation}
Since $v$ is a weak solution to \eqref{eq:axially:derivative}, by using again \cite[Theorem 1.1]{CorFioVit25} one has that $v \in C^{0,\alpha}(B_{1/2})$ for some $\alpha\in (0,1)$. Then, by applying classical Schauder regularity theory to \eqref{eq:axially:Delta:u}, we get that $u \in C^{2,\alpha}(B_{1/3})$.

Resuming, we have shown that if $u$ is an axially symmetric in $y$ weak solution to \eqref{eq:weak:sol:conormal:0}, then $u \in C^{2,\alpha}(B_{1/3})$. Therefore, since ${v}$ is an axially symmetric in $y$ weak solution to \eqref{eq:axially:derivative}, we get that ${v} \in C^{2,\alpha'}(B_{1/3})$, for some $\alpha' \in (0,1)$ and, by using again classical Schauder regularity theory to \eqref{eq:axially:Delta:u}, this implies $u \in C^{4,\alpha'}(B_{1/4})$. By iterating this procedure and using a covering lemma our statement follows.
\end{proof}

\section{Inhomogeneous conormal boundary conditions on the characteristic thin set}

In this section, we extend our regularity results by considering equations that satisfy a non homogeneous conormal condition on the thin set $ \Sigma_0 $. As discussed in Remark \ref{rem:conormal}, the weak formulation of equation \eqref{eq:weak:sol:conormal:0}, which holds across $ \Sigma_0 $, implies the fact that the solutions formally satisfy a homogeneous conormal condition on $\Sigma_0$. In the mid-range $ a+n \in (0,2) $, it is possible to impose inhomogeneous conormal boundary conditions on $ \Sigma_0 $.

\begin{Definition}\label{Def:inhomogeneous.conormal}
    Let $2\le n < d$ and $a+n \in (0,2)$. We say that $u$ is a weak solution to 
    \begin{equation}\label{eq:weak:non:homo}
    \begin{cases}
        -\dive(|y|^a \D u) = 0, & \text{ in } B_1,\\
        \displaystyle-\lim_{|y|\to 0}
        |y|^{a+n-1} \D u\cdot \frac{y}{|y|} = g(x), & \text{ on }\Sigma_0 \cap B_1,
    \end{cases}
    \end{equation}
    if $u \in H^{1,a}(B_1)$ and satisfies
    \[
    \int_{B_1}|y|^a \D u \cdot \D \phi dz = \omega_n\int_{\Sigma_0 \cap B_1} g(x) \phi(x,0) dx, \qquad \text{for every }\phi\in C_c^\infty(B_1),
    \]
    where $\omega_n = |\mathbb{S}^{n-1}|$. 
\end{Definition}

Before stating the main result of this section, we first discuss the expected regularity for the solutions of the equation. Specifically, the function $ |y|^{2-a-n} $ solves equation \eqref{eq:weak:non:homo} with $ g(x) = cost $, and this provides an upper bound on the regularity. In particular, when $ a + n \in (1,2) $, we expect H\"older continuity of solutions, and when $ a + n \in (0,1) $, we expect H\"older continuity of their gradient. Let us remark that $ |y|^{2-a-n} $ also solves the homogeneous Dirichlet problem \eqref{eq:Dirichlet:homogeneous} whenever $a+n<2$, see Section \ref{sec:9}.

Then, let us recall the exponent
\begin{equation}\label{alphastar2}
    \alpha_*=\alpha_*(a,n)= \frac{2-a-n + \sqrt{(2-a-n)^2 + 4(n-1)}}{2}.
\end{equation}
As established in \cite{CorFioVit25}, it represents a upper bound for the regularity degree of solutions having a homogeneous conormal boundary condition at $\Sigma_0$.

\begin{Proposition}\label{Prop:inhomogeneous}
    Let $2\le n < d$, $a+n \in (0,2)$ and $u$ be a weak solution to \eqref{eq:weak:non:homo}. Then, the following holds true.
    \begin{itemize}
        \item [i)] Let $g\in L^p(\Sigma_0\cap B_1)$, with $p>\frac{d-n}{2-a-n}$ and $\alpha \in (0,2-a-n-\frac{d-n}{p}]\cap (0,1)$. Then,  $u \in C^{0,\alpha}_{\rm loc}(B_1)$ and there exists a constant $C>0$ depending only on $d$, $n$, $a$, $p$ and $\alpha$ such that 
        \[
        \|u\|_{C^{0,\alpha}(B_{1/2})} \le C \big( 
        \|u\|_{L^{2,a}(B_{1})}
        +
        \|g\|_{L^{p}(\Sigma_0 \cap B_1)}
        \big).
        \]
        \item [ii)] Let us suppose that $a+n \in (0,1)$. Let $g\in L^p(\Sigma_0\cap B_1)$, with $p>\frac{d-n}{1-a-n}$ and $\alpha \in (0,1-a-n-\frac{d-n}{p}]$. Then,  $u \in C^{1,\alpha}_{\rm loc}(B_1)$ and there exists a constant $C>0$ depending only on $d$, $n$, $a$, $p$ and $\alpha$ such that 
        \[
        \|u\|_{C^{1,\alpha}(B_{1/2})} \le C \big( 
        \|u\|_{L^{2,a}(B_{1})}
        +
        \|g\|_{L^{p}(\Sigma_0 \cap B_1)}
        \big).
        \]
        \item [iii)] Let us suppose that $a+n \in (0,1)$ and let $\alpha \in (0,1-a-n)$. Let  $g\in C^{0,\alpha}(\Sigma_0\cap B_1)$. Then,  $u \in C^{1,\alpha}_{\rm loc}(B_1)$ and there exists a constant $C>0$ depending only on $d$, $n$, $a$ and $\alpha$ such that 
        \[
        \|u\|_{C^{1,\alpha}(B_{1/2})} \le C \big( 
        \|u\|_{L^{2,a}(B_{1})}
        +
        \|g\|_{C^{0,\alpha}(\Sigma_0\cap B_1)}
        \big).
        \]
    \end{itemize}

\end{Proposition}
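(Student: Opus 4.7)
The plan is to decompose $u = u_{\text{sym}} + u_{\text{osc}}$, where
$$u_{\text{sym}}(x,y) := \frac{1}{|\mathbb{S}^{n-1}|}\int_{\mathbb{S}^{n-1}} u(x,|y|\sigma)\,d\sigma$$
is the spherical average of $u$ in the $y$ variable and $u_{\text{osc}} := u - u_{\text{sym}}$ is its orthogonal complement. Testing \eqref{eq:weak:non:homo} against rotated functions $\phi(x,R_\sigma y)$ and integrating in $\sigma\in SO(n)$, one checks that $u_{\text{sym}}$ is an axially symmetric in $y$ weak solution of \eqref{eq:weak:non:homo} with the same Neumann datum $g(x)$ (which is invariant under rotations of $y$), while $u_{\text{osc}}$ is $L_a$-harmonic across $\Sigma_0$ in the sense of Definition \ref{def:weak:sol:hom:0}. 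Since $\alpha_*(a,n) > 2-a-n$ and $\alpha_*(a,n) - 1 > 1-a-n$ for $n\ge 2$, the H\"older and $C^{1,\alpha}$ estimates required for $u_{\text{osc}}$ in parts i)--iii) follow at once from \cite[Theorem~1.1, Theorem~1.2]{CorFioVit25}, with norms controlled by $\|u\|_{L^{2,a}(B_1)}$. This reduces everything to the axially symmetric summand.

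For $u_{\text{sym}}$, I would adapt the one-to-one correspondence of Lemma \ref{L:axially:1-1} to the inhomogeneous setting: the same averaging argument, now applied with the boundary term $\omega_n\int g(x)\phi(x,0)\,dx$, shows that $\tilde u(x,r)$ defined by $u_{\text{sym}}(x,y) = \tilde u(x,|y|)$ weakly solves
$$-\dive(r^{a+n-1}\nabla \tilde u) = 0 \;\text{in}\; B_1^+, \qquad -\lim_{r\to 0} r^{a+n-1}\partial_r \tilde u = g(x) \;\text{on}\; B_1^{d-n}\times\{0\},$$
on a half-space of $\mathbb{R}^{d-n+1}$ with weight exponent $A=a+n-1\in(-1,1)$. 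This is exactly the Muckenhoupt $A_2$ setting of the Caffarelli--Silvestre extension on a hyperplane, for which the regularity theory developed in \cite{SirTerVit21a, TerTorVit24a} provides the $L^p$-to-$C^{0,\alpha}$ gain of $2-a-n-\tfrac{d-n}{p}$ claimed in i), the $L^p$-to-$C^{1,\alpha}$ gain of $1-a-n-\tfrac{d-n}{p}$ claimed in ii), and the Schauder-type gain $C^{0,\alpha}\to C^{1,\alpha}$ claimed in iii), with quantitative bounds in terms of $\|g\|_{L^p}$ (respectively $\|g\|_{C^{0,\alpha}}$) and $\|\tilde u\|_{L^{2,a+n-1}(B_1^+)}$, the latter being controlled by $\|u\|_{L^{2,a}(B_1)}$ via the same change of variables as in Lemma \ref{L:axially:1-1}.

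Transferring these estimates back to $u_{\text{sym}}(x,y)=\tilde u(x,|y|)$ is immediate for the H\"older statement i), since $y\mapsto |y|$ is $1$-Lipschitz. The delicate point, and the one I expect to be the main obstacle, is the passage from $\tilde u\in C^{1,\alpha}(\overline{B_{1/2}^+})$ to $u_{\text{sym}}\in C^{1,\alpha}$ near $\Sigma_0$: continuity of $\nabla_y u_{\text{sym}}=\partial_r\tilde u\,\tfrac{y}{|y|}$ across $\Sigma_0$ is not automatic and requires $\partial_r\tilde u$ to vanish on $\{r=0\}$. The Neumann condition forces precisely this vanishing, with a sharp rate: since $a+n-1\in(-1,0)$, the identity $r^{a+n-1}\partial_r\tilde u\to -g(x)$ yields $\partial_r\tilde u(x,r) = O(r^{1-a-n})$ as $r\to 0$, so $\nabla_y u_{\text{sym}}$ extends continuously across $\Sigma_0$ but with H\"older exponent capped by $1-a-n$. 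This cap is exactly the upper bound on $\alpha$ in parts ii) and iii) and accounts for their sharpness. Summing the regularities of $u_{\text{sym}}$ and $u_{\text{osc}}$ gives the three claimed estimates.
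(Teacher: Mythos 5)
Your argument is correct and, at its core, coincides with the paper's: both proofs reduce the inhomogeneous conormal problem to (a) an axially symmetric solution carrying the datum $g$, treated through the codimension-one dictionary of Lemma \ref{L:axially:1-1} and the half-space regularity theory of \cite{SirTerVit21a,TerTorVit24a}, plus (b) a remainder solving the homogeneous equation across $\Sigma_0$, treated with \cite[Theorems 1.1 and 1.2]{CorFioVit25}. The only genuine difference is how the decomposition is produced. You split $u=u_{\mathrm{sym}}+u_{\mathrm{osc}}$ intrinsically via the spherical average in $y$, using rotation invariance of the weight and of $g$ to check that $u_{\mathrm{sym}}$ inherits the full inhomogeneous problem; this is clean and avoids any auxiliary construction, at the price of having to carry $\|u\|_{L^{2,a}}$ in the estimate for the symmetric part (harmless, since it appears in the final bound anyway). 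The paper instead solves an auxiliary mixed boundary value problem for $\tilde u_1$ in $B_1^+$ with Neumann datum $g$ and zero lateral Dirichlet data, lifts it to $u_1(x,y)=\tilde u_1(x,|y|)$, and sets $u_2=u-u_1$; this gives $\|\tilde u_1\|_{C^{1,\alpha}}\le C\|g\|_{C^{0,\alpha}}$ with no $L^2$ term but requires well-posedness of the auxiliary problem. You also correctly isolate the delicate step — the passage from $\tilde u\in C^{1,\alpha}(\overline{B_{1/2}^+})$ to $C^{1,\alpha}$ regularity of the lifted function across $\Sigma_0$, which hinges on $\partial_r\tilde u=0$ at $\{r=0\}$ with rate $O(r^{1-a-n})$ forced by the Neumann condition since $a+n-1<0$ — which is exactly what the paper extracts from \cite[Theorem 8.4]{SirTerVit21a} and transfers via \cite[Lemma 7.5]{SirTerVit21a} and \cite[Lemma 2.9]{AudFioVit24b}. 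Your verification that $\alpha_*(a,n)>2-a-n$ and $\alpha_*(a,n)-1>1-a-n$ for $n\ge 2$, needed to apply the homogeneous theory to $u_{\mathrm{osc}}$ up to the claimed exponents, matches the paper's.
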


\begin{proof}
We provide the proof of \emph{iii)} only, as the proofs of $i)$ and $ii)$ follow from the same arguments.
 
Let us consider the weak solution $\tilde{u}_1 = \tilde{u}_1(x,r) : B_1^+\subset \R^{d-n+1}\to \R$ to
    \begin{equation*}
    \begin{cases}
        -\dive(r^{a+n-1} \D \tilde{u}_1) = 0, & \text{ in } B_1^+,\\
        \displaystyle-\lim_{r\to 0}
        r^{a+n-1} \partial_r \tilde{u}_1 = g(x), & \text{ on }B_1\cap\{r=0\},\\
        \tilde{u}_1=0, & \text{ on } \partial B_1\cap\{r>0\}.
    \end{cases}
    \end{equation*}
By \cite[Theorem 8.4]{SirTerVit21a}, it follows that $\partial_r \tilde u_1 =0$ on $\Sigma_0\cap B_1$ and
\[
\|\tilde u_1\|_{C^{1,\alpha}(B_{1/2}^+)} \le C 
        \|g\|_{C^{0,\alpha}(\Sigma_0\cap B_1)},
\]
where $C>0$ depends only on $d$, $n$, $a$ and $\alpha$. Hence, by applying Lemma \ref{L:axially:1-1} with a minor modification, one has that $u_1(x,y):=\tilde u_1(x,|y|)$ is an axially symmetric in $y$ solution to 
    \begin{equation*}
    \begin{cases}
        -\dive(|y|^a \D u_1) = 0, & \text{ in } B_1,\\
        \displaystyle-\lim_{|y|\to 0}
        |y|^{a+n-1} \D u_1 \cdot \frac{y}{|y|} = g(x), & \text{ on }\Sigma_0 \cap B_1,\\
        u_1=0, &\text{ on }\partial B_1.
    \end{cases}
    \end{equation*}
Since $\partial_r \tilde{u}_1 = 0$ in $\Sigma_0 \cap B_1$, following computations analogous to those in \cite[Lemma 7.5]{SirTerVit21a} and \cite[Lemma 2.9]{AudFioVit24b}, we conclude that $u_1$ inherits the same regularity as $\tilde{u}_1$; that is,
\[
\| u_1\|_{C^{1,\alpha}(B_{1/2})} \le C 
        \|g\|_{C^{0,\alpha}(\Sigma_0\cap B_1)}.
\]
Next, let us define $u_2:= u-u_1$, which is a weak solution in the sense of Definition \ref{def:weak:sol:hom:0} to 
\[
-\dive(|y|^a \D u_2) =0,\quad \text{ in }B_1.
\]
Noticing that $\alpha_*=\alpha_*(n,a)$ given by \eqref{alphastar2} satisfies $\alpha_*-1>1-a-n > \alpha$, \cite[Theorem 1.2]{CorFioVit25} yields that
\[
\| u_2\|_{C^{1,\alpha}(B_{1/2})} \le C 
        \|u\|_{L^{2,a}(B_1)}.
\]
Hence, $u=u_1+u_2$ satisfies the desired estimate and the proof is complete.
\end{proof}

\section{Higher codimensional extensions of fractional Laplacians}\label{sec:10}

The aim of this section is to establish a connection between the degenerate equations discussed in this paper and fractional Laplacians on the very thin set $\Sigma_0$, in the sense of Dirichlet-to-Neumann maps, in the spirit of \cite{CafSil07}. Let $s\in(0,1)$ and $2\leq n<d$. The $s$-fractional Laplacian of sufficiently regular functions can be defined in $\R^{d-n}$ equivalently as a integro-differential operator
$$(-\Delta)^su(x)=C_{d-n,s}\lim_{\varepsilon\to0^+}\int_{\R^{d-n}\setminus B_\varepsilon(x)}\frac{u(x)-u(\xi)}{|x-\xi|^{d-n+2s}}\, d\xi\,,$$
or via Fourier transform 
$$\widehat{(-\Delta)^su}(\xi)=|\xi|^{2s}\hat u(\xi)\,.$$

Assuming $d-n >2s$, and using the fractional Hardy inequality, we can set fractional problems in $\mathcal D^s(\R^{d-n})$, defined as the completion of $C^\infty_c(\R^{d-n})$ with respect to the norm
$$\|u\|^2_{\mathcal D^s(\R^{d-n})}=\int_{\R^{d-n}} |(-\Delta)^{s/2}u(x)|^2\,.$$
Let $a=2-n-2s$, which satisfies $a+n\in(0,2)$. This corresponds to the mid-range regime, where the local weighted capacity of $\Sigma_0$ is positive and finite, see \cite[Lemma 2.4]{CorFioVit25}. Next, we define $\mathcal D^{1,a}(\R^d)$ as the completion of $C^\infty_c(\R^{d})$ with respect to
$$\|U\|^2_{\mathcal D^{1,a}(\R^{d})}=\int_{\R^d}|y|^a|\nabla U|^2,$$
which is a norm due to an Hardy-type inequality, see for instance \cite[Theorem 1]{CorMusNaz23}. In the  following result, we establish the existence of a trace operator from $\mathcal D^{1,a}(\R^d)$ to $\mathcal D^{s}(\R^{d-n})$. We refer also to \cite{Nek93} for some related local trace type results.
\begin{Lemma}\label{L:trace}
    Let $2\leq n<d$ and $a+n\in(0,2)$. Then, there exists a constant $C_{a,n}>0$ such that
    \begin{equation}\label{traceD1aDs}
        C_{a,n}\|\phi_{|\Sigma_0}\|_{\mathcal D^s(\R^{d-n})}\leq \|\phi\|_{\mathcal D^{1,a}(\R^{d})}
    \end{equation}
    for any $\phi\in C^\infty_c(\R^d)$ with $s=(2-a-n)/2\in(0,1)$. Moreover, if $d+a>2$ (i.e. $d-n>2s$), \eqref{traceD1aDs} defines the continuous trace operator
    \begin{equation*}
        \mathrm{Tr}: \mathcal D^{1,a}(\R^d)\to \mathcal D^{s}(\R^{d-n}).
    \end{equation*}
\end{Lemma}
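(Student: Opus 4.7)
The plan is to reduce the higher-codimensional trace inequality \eqref{traceD1aDs} to the classical codimension-one Caffarelli--Silvestre trace inequality by a symmetrization argument in the $y$ variable.

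First, given $\phi \in C^\infty_c(\R^d)$, I would symmetrize by averaging over spheres in $y$: set
\[
\tilde\phi(x,r) := \frac{1}{\omega_n}\int_{\mathbb{S}^{n-1}}\phi(x,r\sigma)\,d\sigma, \qquad \Phi(x,y) := \tilde\phi(x,|y|).
\]
Since $\Phi(x,0) = \phi(x,0)$, the trace at $\Sigma_0$ is preserved. Applying Cauchy--Schwarz pointwise to $\nabla_x\tilde\phi$ and $\partial_r \tilde\phi$ and then passing to spherical coordinates in $y$, one obtains the monotonicity
\[
\int_{\R^d}|y|^a|\nabla \Phi|^2\,dz \leq \int_{\R^d}|y|^a|\nabla \phi|^2\,dz,
\]
so it suffices to establish \eqref{traceD1aDs} for axially symmetric $\phi$.

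Next, for such $\phi$, the spherical-in-$y$ change of variables used in the proof of Lemma \ref{L:axially:1-1} gives
\[
\|\phi\|^2_{\mathcal D^{1,a}(\R^d)} = \omega_n\int_{\R^{d-n}\times(0,\infty)}r^{\alpha}|\nabla \tilde\phi|^2\,dxdr, \qquad \alpha := a+n-1.
\]
The assumption $a+n\in(0,2)$ translates precisely into $\alpha\in(-1,1)$, and the exponent $s=(1-\alpha)/2$ matches the target $(2-a-n)/2$. We are thus in the classical Caffarelli--Silvestre framework, and the standard trace inequality in codimension one yields
\[
C_s\,\|\tilde\phi(\cdot,0)\|^2_{\mathcal D^s(\R^{d-n})} \leq \int_{\R^{d-n}\times(0,\infty)}r^\alpha|\nabla\tilde\phi|^2\,dxdr.
\]
Combining the two displays produces \eqref{traceD1aDs} with $C_{a,n}^2 = C_s/\omega_n$.

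Finally, under the additional assumption $d+a>2$, equivalently $d-n>2s$, the fractional Hardy inequality ensures that $\|\cdot\|_{\mathcal D^s(\R^{d-n})}$ is a genuine norm on the completion $\mathcal D^s(\R^{d-n})$, and \eqref{traceD1aDs} together with the density of $C^\infty_c(\R^d)$ in $\mathcal D^{1,a}(\R^d)$ allows one to extend $\phi\mapsto\phi|_{\Sigma_0}$ by continuity to a bounded linear operator $\mathrm{Tr}:\mathcal D^{1,a}(\R^d)\to\mathcal D^s(\R^{d-n})$. The only non-routine ingredient is the classical codimension-one trace inequality invoked in the second step; I would either quote \cite{CafSil07} directly or argue via Fourier analysis, identifying the energy-minimizing extension of a prescribed boundary datum with the convolution against the Poisson-type kernel $P$ displayed in the introduction, whose weighted Dirichlet energy computes explicitly to a constant multiple of $\|f\|^2_{\mathcal D^s}$.
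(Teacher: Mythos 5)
Your proof is correct, but it follows a genuinely different route from the paper's. You first symmetrize in $y$ (spherical averaging decreases the weighted Dirichlet energy by Jensen's inequality and preserves the trace on $\Sigma_0$), and then invoke the classical codimension-one Caffarelli--Silvestre trace inequality for the weight $r^{\alpha}$ with $\alpha=a+n-1\in(-1,1)$. The paper instead adapts \cite[Theorem 3.8]{CorMus22}: it takes the Fourier transform in $x$, rescales $y=|\xi|^{-1}t$ to factor out $|\xi|^{2s}$, and reduces the inequality to the strict positivity of $\inf\{\int_{\R^n}|t|^a(|v|^2+|\nabla v|^2)\,dt : v\in C^\infty_c(\R^n),\ v(0)=1\}$, which is proved by a contradiction argument resting on the positive weighted capacity of the origin in $\R^n$ (this is where $a+n\in(0,2)$ enters for the authors, whereas for you it enters as the $A_2$ condition $\alpha\in(-1,1)$ needed for the codimension-one trace theorem). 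Your approach is shorter and makes the link with the codimension-one theory explicit, consistent with the philosophy of Lemma \ref{L:axially:1-1}, at the cost of importing the classical trace inequality as a black box; the paper's approach is self-contained modulo the capacity estimate of \cite[Lemma 2.4]{CorFioVit25} and exhibits $C_{a,n}$ as an explicit variational quantity, which is then identified with $d_{a,n}$ in the remark following Proposition \ref{P:extension}. Two immaterial quibbles: your constant should come out as $C_{a,n}^2=\omega_n C_s$ rather than $C_s/\omega_n$ (chaining $\|\phi\|^2_{\mathcal D^{1,a}}\geq\omega_n\int r^\alpha|\nabla\tilde\phi|^2\geq\omega_n C_s\|\tilde\phi(\cdot,0)\|^2_{\mathcal D^s}$), and in the symmetrization step you also use $|\nabla_y\phi\cdot\sigma|\leq|\nabla_y\phi|$ alongside Jensen; neither affects the validity of the argument since the lemma only asserts the existence of some positive constant.
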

\begin{proof}
The proof is an adaptation of \cite[Theorem 3.8]{CorMus22}, so we omit most of the details. Let $\phi \in C^\infty_c(\R^d)$, and let $\hat \phi$ denote the Fourier transform of $\phi$ with respect to the $x$ variables. Then we have 
\[
\int_{\R^d} |y|^a |\nabla \phi|^2 dz = \int_{\R^{d-n}}\int_{\R^n} |y|^a(|\xi|^2|\hat \phi|^2 + |\nabla_y \hat \phi|^2)dy d\xi\,.
\]
Next, define $v(\xi, t) = \hat\phi(\xi, |\xi|^{-1} t)$, and perform the change of variables $y = |\xi|^{-1} t$ in the integral above. This leads to 
\[
\int_{\R^d} |y|^a |\nabla \phi|^2 dz = \int_{\R^{d-n}}|\xi|^{2-a-n}\int_{\R^n} |t|^a(|v|^2 + |\nabla_t v|^2)dt\, d\xi\,.
\]
We now claim that 
\[
\inf_{v \in C^\infty_c(\R^n), v(0) = 1} \int_{\R^n} |t|^a(|v|^2 + |\nabla_t v|^2)dt = C_{a,n} >0\,.
\]
Assume that this claim is true. Then, recalling that $v(\xi, 0) = \hat \phi(\xi, 0)$, we obtain 
\[
\int_{\R^d} |y|^a |\nabla \phi|^2 dz \geq C_{a,n} \int_{\R^{d-n}}|\xi|^{2-a-n}|\hat \phi(\xi, 0)|^2 d\xi = C_{a,n} \int_{\R^{d-n}} |(-\Delta)^{s/2}\phi(x, 0)|^2 dx \,,
\]
with $s=(2-a-n)/2\in(0,1)$. This completes the proof of \eqref{traceD1aDs}. The rest of the proof follows in a standard way. 

Let us now prove the claim. To do this, we argue by contradiction. Assume that $C_{a,n} = 0$. Then, for every $\delta >0$ there exists $v_\delta \in C^\infty_c(\R^n)$ such that $v_\delta(0) =1$ and 
\begin{equation}\label{eq:tracpass}
\int_{\R^n} |t|^a(|v|^2 + |\nabla_t v|^2)dt < \delta\,.
\end{equation}
Using computations similar to those in the proof of part $iii)$ of \cite[Lemma 2.4]{CorFioVit25}, we find that for every $\rho >0$,
\[
n \omega_n = \int_{\mathbb{S}^{n-1}}|v_\delta(0)|^2 d \sigma \leq c \rho^{2-a-n} \int_{\R^{n}}|t|^a |\nabla_t v_\delta|^2 dt + 2 \int_{\mathbb{S}^{n-1}}|v_\delta(\rho \sigma )|^2 d\sigma\,.
\]
Multiplying by $\rho^{a+n -1}$, integrating over $\rho \in (0,1)$, and using \eqref{eq:tracpass}, we obtain
\[
\frac{n\omega_n}{a+n} \leq c \int_{\R^{n}}|t|^a |\nabla_t v_\delta|^2 dt + 2 \int_{B_1}|t|^a|v_\delta|^2 dt \leq c\delta\,,
\]
which is a contradiction for sufficiently small $\delta$. Therefore, the claim holds true and the proof is complete. 
\end{proof}
As a dual result, we define a unique minimal energy extension in $\mathcal D^{1,a}(\R^{d})$ for functions in $\mathcal D^{s}(\R^{d-n})$.
\begin{Lemma}
   Let $2\leq n<d$ with $d+a>2$ and $a+n\in(0,2)$. Let $s=(2-a-n)/2$. Then, for any $u\in\mathcal D^{s}(\R^{d-n})$ the minimization problem
   \begin{equation}\label{extensionDsD1a}
       \inf_{U\in \mathcal D^{1,a}(\R^d) \,, \, \mathrm{Tr}U=u}\|U\|^2_{\mathcal D^{1,a}(\R^d)}
   \end{equation}
   admits a unique minimizer. Moreover, \eqref{extensionDsD1a} defines an extension operator
   \begin{equation*}
        \mathrm{Ext}: \mathcal D^{s}(\R^{d-n})\to \mathcal D^{1,a}(\R^d).
    \end{equation*}
\end{Lemma}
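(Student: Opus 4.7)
The plan is to apply the direct method of the calculus of variations in the Hilbert space $\mathcal{D}^{1,a}(\R^d)$ endowed with the inner product $\langle U,V\rangle_a:=\int_{\R^d}|y|^a\nabla U\cdot\nabla V\,dz$. First observe that the admissible class $K_u:=\{U\in\mathcal{D}^{1,a}(\R^d)\,:\,\mathrm{Tr}\,U=u\}$ is convex and closed in $\mathcal{D}^{1,a}(\R^d)$, the latter by continuity of the trace operator established in Lemma \ref{L:trace}. Once $K_u$ is known to be non-empty, the Hilbert projection theorem applied to the closed convex set $K_u$ automatically delivers a unique minimizer $U_*=:\mathrm{Ext}(u)$, characterized by the Euler-Lagrange orthogonality
\[
\langle U_*,V\rangle_a=0\qquad\text{for all }V\in\ker\mathrm{Tr}.
\]

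The crucial step is therefore to show that $K_u\neq\varnothing$ for every $u\in\mathcal{D}^s(\R^{d-n})$, i.e., surjectivity of $\mathrm{Tr}$. The plan is to construct an explicit bounded right inverse using the Poisson-type kernel $P$ introduced in Section \ref{sec:10}: for $u\in C_c^\infty(\R^{d-n})$, set
\[
E_0 u(x,y):=\bigl(u\ast_{x} P(\cdot,y)\bigr)(x).
\]
Taking the Fourier transform in $x$, so that $\widehat{E_0 u}(\xi,y)=\hat u(\xi)\,G(|\xi|\,|y|)$ for the explicit $y$-profile $G$ of $\hat P$, and performing the same rescaling $y=t/|\xi|$ used in the proof of Lemma \ref{L:trace}, a direct Plancherel computation produces
\[
\|E_0 u\|^2_{\mathcal{D}^{1,a}(\R^d)}=\kappa_{a,n}\|u\|^2_{\mathcal{D}^s(\R^{d-n})},
\]
for some explicit constant $\kappa_{a,n}\in(0,\infty)$. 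By density of $C_c^\infty(\R^{d-n})$ in $\mathcal{D}^s(\R^{d-n})$ and continuity, $E_0$ extends to a bounded linear operator $\mathcal{D}^s(\R^{d-n})\to\mathcal{D}^{1,a}(\R^d)$, and the identity $\mathrm{Tr}\,E_0 u=u$, verified on smooth data by direct computation (taking $y\to 0$ in the convolution), passes to the limit by continuity of $\mathrm{Tr}$.

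With $K_u\neq\varnothing$ in hand, the existence and uniqueness of $\mathrm{Ext}(u)$ are settled. Linearity of $\mathrm{Ext}$ follows from uniqueness: given $u_1,u_2\in\mathcal{D}^s(\R^{d-n})$ and scalars $\lambda,\mu$, the function $\lambda\,\mathrm{Ext}(u_1)+\mu\,\mathrm{Ext}(u_2)$ belongs to $K_{\lambda u_1+\mu u_2}$ and inherits the orthogonality property to $\ker\mathrm{Tr}$, so it must coincide with $\mathrm{Ext}(\lambda u_1+\mu u_2)$. Continuity is immediate from optimality and the bound on $E_0$:
\[
\|\mathrm{Ext}(u)\|_{\mathcal{D}^{1,a}}\leq\|E_0 u\|_{\mathcal{D}^{1,a}}\leq\sqrt{\kappa_{a,n}}\,\|u\|_{\mathcal{D}^s}.
\]

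The main obstacle is the Plancherel/Poisson computation: one must verify that $P(\cdot,y)$ lies in the function classes making the convolution meaningful for $u\in C_c^\infty(\R^{d-n})$, and that the resulting $y$-integral $\int_{\R^n}|t|^a\bigl(|\xi|^2|G(|t|)|^2+|\nabla G(|t|)|^2|\xi|^2\bigr)\,dt$ produced by the scaling is finite, so that the $|\xi|^{2-a-n}$ factor emerges cleanly and reproduces the $\mathcal{D}^s$-norm up to a multiplicative constant. This is essentially the dual calculation to the one carried out in Lemma \ref{L:trace}, and is the only non-formal ingredient of the argument; everything else is a standard Hilbert-space projection.
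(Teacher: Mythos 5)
Your argument is correct and its core --- viewing \eqref{extensionDsD1a} as orthogonal projection onto the closed affine set $K_u=\{U\in\mathcal D^{1,a}(\R^d):\mathrm{Tr}\,U=u\}$ in the Hilbert space $\mathcal D^{1,a}(\R^d)$, with the Euler--Lagrange orthogonality to $\ker\mathrm{Tr}$ --- is exactly the paper's ``standard variational argument.'' The one place where you go beyond the paper's own proof is the verification that $K_u\neq\varnothing$, i.e.\ that $\mathrm{Tr}$ is surjective: the paper is silent on this point (Lemma \ref{L:trace} only gives boundedness, hence at best density of the range), and the explicit Poisson extension $u\ast P$ appears only afterwards, in Proposition \ref{P:extension}; your construction of the bounded right inverse $E_0$ via the Plancherel computation is precisely the missing ingredient, and it is sound, being the radial-in-$y$ transplant of the codimension-one Caffarelli--Silvestre energy identity (the exponent $a+n-1=1-2s$ emerges after polar coordinates in $t$). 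The only details you should still spell out are that $E_0u$ --- which is neither smooth across $\Sigma_0$ nor compactly supported --- genuinely belongs to the completion $\mathcal D^{1,a}(\R^d)$, and that $\mathrm{Tr}\,E_0u=u$ holds in the sense of the trace operator, i.e.\ via an approximating sequence in $C^\infty_c(\R^d)$, not merely as a pointwise limit as $y\to0$.
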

\begin{proof}
Fix $u \in \mathcal D^s(\R^{d-n})$. The existence of a minimizer $\bar U$ for \eqref{extensionDsD1a} follows from standard variational arguments. In particular, $\bar U$ satisfies
\begin{equation}\label{eq:mineq}
\int_{\R^d}|y|^a \nabla \bar U \cdot \nabla \phi \, dz = 0\,, \qquad \text{ for every } \phi \in \mathcal D^{1,a}(\R^d)\,, \ \mathrm{Tr} \phi = 0\,.
\end{equation}
Finally, to prove the uniqueness of $\bar U$, it is sufficient to use a standard contradiction argument to show that \eqref{eq:mineq} admits an unique solution in the set $\{ U \in \mathcal D^{1,a}(\R^n) \mid \mathrm{Tr}U = u\}$. 
\end{proof}
Then, the extension result can be summarized as below.
\begin{Proposition}\label{P:extension}
    Let $2\leq n<d$ with $d+a>2$ and $a+n\in(0,2)$. Let $s=(2-a-n)/2\in(0,1)$ Then for any $u\in\mathcal D^{s}(\R^{d-n})$, the extension $U=\mathrm{Ext}(u)$ is given by
    \begin{equation*}
        U(x,y)=u\ast P(x,y),\qquad P(x,y)=\frac{\Gamma(\frac{d-n + 2s}{2})}{\pi^{\frac{d-n}{2}}\Gamma(s)}\frac{|y|^{2s}}{(|x|^2+|y|^2)^{\frac{d-n+2s}{2}}}\,,
    \end{equation*}
    and is solution to
$$\begin{cases}
-\mathrm{div}(|y|^a\nabla U)=0 &\mathrm{in \ }\R^d \setminus \Sigma_0\\ 
\displaystyle-\lim_{|y|\to0}|y|^{a+n-1}\nabla U\cdot\frac{y}{|y|}=d_{a,n}(-\Delta)^su &\mathrm{on \ } \Sigma_0\,,
\end{cases}$$
in the sense that $U \in \mathcal{D}^{1,a}(\R^d)$ and satisfies
\[
\int_{\R^{d}} |y|^a \nabla  U \cdot \nabla \phi\, dx\, dy = d_{a,n}\int_{\R^{d-n}}(-\Delta)^s u \, \phi(x, 0) \,dx\,, \quad \text{ for every } \phi\in C^{\infty}_c(\R^{d})\,,
\]
where $d_{a,n} = 2^{a+n-1}\Gamma(\frac{a+n}{2})/\Gamma(\frac{2-a-n}{2})$.
\end{Proposition}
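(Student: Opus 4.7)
The plan is to reduce to the classical Caffarelli-Silvestre extension in $\R^{d-n+1}_+$ by exploiting the axial symmetry of the kernel $P$. Since $P(x,y)$ depends on $y$ only through $|y|$, so does $U = u*P$, and we may write $U(x,y) = \tilde U(x,|y|)$ with $\tilde U(x,r) := (u * \tilde P(\cdot,r))(x)$, where $\tilde P:\R^{d-n}\times(0,\infty)\to\R$ is obtained by replacing $|y|$ with $r$ in the formula for $P$. Observing that $s=(2-a-n)/2\in(0,1)$ and $a+n-1 = 1-2s$, the function $\tilde P$ is precisely the classical Caffarelli-Silvestre Poisson kernel associated with $(-\Delta)^s$ on $\R^{d-n}$, whose extension lives in $\R^{d-n+1}_+$ with weight $r^{1-2s}$; the hypothesis $d-n>2s$ matches exactly the condition required to set the Caffarelli-Silvestre problem in its energy space.

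First, I invoke the classical result of \cite{CafSil07}: for $u\in \mathcal{D}^s(\R^{d-n})$, the function $\tilde U$ belongs to the weighted energy space $\mathcal{D}^{1}(\R^{d-n+1}_+;\,r^{1-2s}\,dx\,dr)$, has trace $u$ on $\{r=0\}$, and satisfies
\[
\int_{\R^{d-n+1}_+} r^{1-2s}\,\nabla \tilde U \cdot \nabla \tilde\phi\, dx\, dr = \kappa_s\int_{\R^{d-n}}(-\Delta)^s u(x)\,\tilde\phi(x,0)\, dx
\]
for every $\tilde\phi\in C^\infty_c(\R^{d-n+1})$, with $\kappa_s = 2^{1-2s}\Gamma(1-s)/\Gamma(s)$. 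I then lift this statement back to $\R^d$: polar coordinates in $y$ together with axial symmetry yield $\|U\|_{\mathcal{D}^{1,a}(\R^d)}^2 = \omega_n \|\tilde U\|^2_{\mathcal{D}^1(\R^{d-n+1}_+;\,r^{1-2s})} < \infty$, so $U\in \mathcal{D}^{1,a}(\R^d)$, while $\mathrm{Tr}\,U = u$ follows from the fact that $P(\cdot,y)$ is an approximate identity as $|y|\to 0$. For the weak formulation, fix $\phi\in C^\infty_c(\R^d)$ and define the spherical average $\bar\phi(x,r) := \int_{\mathbb{S}^{n-1}} \phi(x,r\sigma)\, d\sigma$; since $\sigma\mapsto-\sigma$ is a measure-preserving involution of $\mathbb{S}^{n-1}$, $\bar\phi$ is even in $r$ and extends to a smooth compactly supported function on $\R^{d-n+1}$. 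Mimicking the computation in the proof of Lemma \ref{L:axially:1-1},
\[
\int_{\R^d} |y|^a \nabla U \cdot \nabla \phi\, dz = \int_{\R^{d-n+1}_+} r^{a+n-1}\,\nabla \tilde U \cdot \nabla \bar\phi\, dx\, dr,
\]
so applying the Caffarelli-Silvestre formulation with test $\bar\phi$ and using $\bar\phi(x,0) = \omega_n\phi(x,0)$ yields the desired identity with the explicit constant $d_{a,n}$, after matching $\kappa_s$ via $a+n = 2-2s$ and tracking the $\omega_n$ produced by spherical integration.

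Finally, the uniqueness result from the preceding lemma identifies any weak solution in $\mathcal{D}^{1,a}(\R^d)$ with trace $u$ as $\mathrm{Ext}(u)$, so that $U = \mathrm{Ext}(u)$. The main technical point is the passage between the full $\R^d$ weak formulation (with weight $|y|^a$) and the Caffarelli-Silvestre one (with weight $r^{1-2s}$ on $\R^{d-n+1}_+$): once one has recognized that axial symmetrization of test functions provides admissible test functions on the half-space and that the two energies differ by precisely the factor $\omega_n$, the remaining steps reduce to standard approximate-identity and trace arguments and to the explicit bookkeeping of the constants through the spherical integration.
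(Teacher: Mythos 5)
Your proof follows essentially the same route as the paper's: reduce to the classical Caffarelli--Silvestre extension via the axial symmetry of the Poisson kernel, transfer the energy and the weak formulation between $\R^{d}$ and $\R^{d-n+1}_+$ by the computations of Lemma \ref{L:axially:1-1}, and identify $U$ with $\mathrm{Ext}(u)$ through the uniqueness of the minimizer. The only difference is that you spell out the spherical averaging of test functions and the bookkeeping of the constants (matching $\kappa_s$ with $d_{a,n}$ and tracking the factor $\omega_n$), which the paper leaves implicit.
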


\begin{proof}
Let $u \in \mathcal D^s(\R^n)$. Notice that $P(x,y)=P(|x|,|y|)$, and hence $U(x,y)=\tilde U(x,|y|)$, where $\tilde U(x, r) = u * P(x, r)$ for $r>0$. From the codimension $1$ extension theory in \cite{CafSil07}, we have that $\tilde U \in \mathcal D^{1, 1-2s}(\R^{d-n})$, and 
\[
 \begin{cases}
-\mathrm{div}(r^{1-2s}\nabla \tilde U)=0 &\mathrm{in \ }\R^{d-n+1}_+ \\
\displaystyle-\lim_{r\to0}r^{1-2s}\partial_r\tilde U= 2^{1-2s}\frac{\Gamma(1-s)}{\Gamma(s)}(-\Delta)^su &\mathrm{on \ } \R^{d-n}\,,
\end{cases}
\]
or, equivalently, that 
\[
\int_{\R^{d-n+1}_+} r^{a+ n -1} \nabla \tilde U \cdot \nabla \tilde \phi\, dx\, dr = d_{a,n}\int_{\R^{d-n}}(-\Delta)^s u \,\tilde \phi(x, 0) \,dx\,, \quad \text{ for every } \tilde \phi\in C^{\infty}_c(\overline{\R^{d-n+1}_+})\,,
\]
where we used that $s=(2-a-n)/2$.
Thus, adapting the argument of the proof of Lemma \ref{L:axially:1-1} we infer that $U \in \mathcal D^{1,a}(\R^d)$, $\mathrm{Tr} U = u$ and 
\[
\int_{\R^{d}} |y|^a \nabla  U \cdot \nabla \phi\, dx\, dy = d_{a,n}\int_{\R^{d-n}}(-\Delta)^s u \, \phi(x, 0) \,dx\,, \quad \text{ for every } \phi\in C^{\infty}_c(\R^{d})\,. 
\]
In particular, $U$ is the unique solution of \eqref{eq:mineq} in the set $\{ U \in \mathcal D^{1,a}(\R^n) \mid \mathrm{Tr}U = u\}$, and thus, the unique minimizer of \eqref{extensionDsD1a}. It immediately follows that $U = \mathrm{Ext}(u)$, and the proof is complete. 
\end{proof}

\begin{remark}
Let $u \in \mathcal D^s(\R^{d-n})$. As shown in Proposition \ref{P:extension}, it holds
\[
\int_{\R^{d}} |y|^a \nabla \mathrm{Ext}(u) \cdot \nabla \phi\, dx\, dy = d_{a,n}\int_{\R^{d-n}}(-\Delta)^s u \, \phi(x, 0) \,dx\,, \quad \text{ for every } \phi \in \mathcal{D}^{1,a}(\R^d)\,. 
\]
By taking $\phi = \mathrm{Ext}(u)$, we immediately infer $\|\mathrm{Ext}(u)\|_{\mathcal D^{1,a}(\R^d)} = d_{a,n}\|u\|_{\mathcal D^s(\R^{d-n})}$. As a consequence, $C_{a,n} = d_{a,n}$, where $C_{a,n}$ is the constant in Lemma \ref{L:trace}. 
\end{remark}

\section{Homogeneous Dirichlet problem via a boundary Harnack principle}\label{sec:9}

In this section, we prove H\"older $C^{0,\alpha}$ and Schauder $C^{1,\alpha}$ regularity for solutions to the homogeneous Dirichlet problem
\begin{equation}\label{eq:Dirichlet:homogeneous}
\begin{cases}
-\mathrm{div}(|y|^a\nabla u)=0 &\mathrm{in \ } B_1\setminus\Sigma_0\\
u=0 &\mathrm{on \ } B_1\cap\Sigma_0\,,
\end{cases}
\end{equation}
whenever $a+n<2$. The solutions we are referring to are elements of $\tilde H^{1,a}(B_1)$, which is defined as the completion of $C_c^\infty(\overline{B_1}\setminus\Sigma_0)$ with respect to the norm
$\|\cdot\|_{H^{1,a}(B_1)}$ and satisfies
\begin{equation*}
\int_{B_1}|y|^a\nabla u\cdot\nabla\phi\,dz=0\,, \qquad \text{for every } \phi\in C^{\infty}_c(B_1\setminus\Sigma_0).
\end{equation*}
We remark that in the supersingular case $a+n\le 0$, any function in $H^{1,a}(B_1)$ is forced to have trace $u=0$ on $\Sigma_0$ (see \cite[Proposition 2.5]{CorFioVit25}). In the mid-range $a+n \in (0,2)$, the trace condition $u=0$ on $\Sigma_0$ is also well-defined, since $\Sigma_0$ has positive local weighted capacity. See also \cite{Nek93} for some trace type theorems on lower dimensional sets.

The idea is to obtain regularity as a corollary of \cite[Theorems 1.1 and 1.2]{CorFioVit25} via a boundary Harnack principle, in the same spirit as \cite{SirTerVit21b}. In fact, the ratio $w:=u/u_0$ between a solution $u$ to \eqref{eq:Dirichlet:homogeneous} and the \emph{characteristic solution} to \eqref{eq:Dirichlet:homogeneous}
\begin{equation*}
u_0(y)=|y|^{2-a-n}
\end{equation*}
solves the equation
\begin{equation}\label{eq:ratio:homogeneous}
-\mathrm{div}(|y|^b\nabla w)=0 \qquad\mathrm{in \ } B_1 
\end{equation}
with exponent $b=4-a-2n$ lying in the superdegenerate range since $b+n=4-a-n>2$.

\begin{Lemma}
Let $a+n<2$, $u$ be a solution to \eqref{eq:Dirichlet:homogeneous} and $u_0=|y|^{2-a-n}$. Then, $w=u/u_0$ solves \eqref{eq:ratio:homogeneous}.
\end{Lemma}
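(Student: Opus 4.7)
The plan is to establish the claim first pointwise in $B_1\setminus\Sigma_0$, and then to promote it to a weak identity across $\Sigma_0$ by means of a cutoff procedure, since the equation for $w$ in \eqref{eq:ratio:homogeneous} is understood in the weak sense on all of $B_1$. The starting point is the algebraic identity $|y|^b=|y|^a u_0^2$, which reduces to $4-a-2n=a+2(2-a-n)$ and immediately yields the key relation
\[
|y|^b\D w=|y|^a(u_0\D u-u\D u_0)\qquad\text{in }B_1\setminus\Sigma_0.
\]
A direct computation shows that $u_0$ is itself a classical $L_a$-harmonic function in $\R^d\setminus\Sigma_0$: indeed $|y|^a\D u_0=(2-a-n)|y|^{-n}y$, whose divergence vanishes for $y\neq 0$. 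Combined with the elementary identity $\dive(|y|^a u_0^2\,\D(u/u_0))=u_0\,\dive(|y|^a\D u)-u\,\dive(|y|^a\D u_0)$, this gives $-\dive(|y|^b\D w)=0$ classically in $B_1\setminus\Sigma_0$.

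To pass to a weak formulation on all of $B_1$, fix $\phi\in C_c^\infty(B_1)$ and introduce a smooth radial-in-$y$ cutoff $\eta_\varepsilon$ satisfying $\eta_\varepsilon\equiv 0$ on $\{|y|\le\varepsilon\}$, $\eta_\varepsilon\equiv 1$ on $\{|y|\ge 2\varepsilon\}$, and $|\D\eta_\varepsilon|\le C/\varepsilon$. The products $u_0\phi\eta_\varepsilon$ and $u\phi\eta_\varepsilon$ are admissible test functions in the weak formulations of $u$ and $u_0$ respectively, since both have support compactly contained in $B_1\setminus\Sigma_0$ (for the second one, one first approximates $u\in\tilde H^{1,a}(B_1)$ by elements of $C_c^\infty(\overline{B_1}\setminus\Sigma_0)$ and uses the interior smoothness of $u_0$ away from $\Sigma_0$). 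Testing, expanding the gradients, and subtracting, the common term $\int|y|^a\phi\eta_\varepsilon\,\D u\cdot\D u_0$ cancels and one is left with
\[
\int_{B_1}|y|^b\eta_\varepsilon\,\D w\cdot\D\phi\,dz=-\int_{B_1}|y|^b\phi\,\D w\cdot\D\eta_\varepsilon\,dz.
\]

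The final step is the passage $\varepsilon\to 0^+$. On the annulus $\{\varepsilon\le|y|\le 2\varepsilon\}$ one has $u_0\lesssim\varepsilon^{2-a-n}$, $|\D u_0|\lesssim\varepsilon^{1-a-n}$ and $|\D\eta_\varepsilon|\lesssim\varepsilon^{-1}$, so rewriting once more $|y|^b\D w=|y|^a(u_0\D u-u\D u_0)$ and applying Cauchy--Schwarz together with the Hardy-type inequality $\int_{B_1}|y|^{a-2}u^2\lesssim\int_{B_1}|y|^a|\D u|^2$, available on $\tilde H^{1,a}(B_1)$ precisely when $a+n<2$ (see for instance \cite{CorMusNaz23}), the right-hand side is bounded by $C\varepsilon^{1-(a+n)/2}\|u\|_{H^{1,a}(B_1)}$ and therefore vanishes in the limit. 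The very same Hardy inequality shows that $w\in H^{1,b}_{\rm loc}(B_1)$, whence dominated convergence makes the left-hand side tend to $\int_{B_1}|y|^b\D w\cdot\D\phi\,dz$, completing the proof. The main technical point is exactly this annulus estimate coupled with the local $H^{1,b}$-integrability of the ratio $w$, both of which rely crucially on the Hardy inequality that fails precisely at the threshold $a+n=2$.
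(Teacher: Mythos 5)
Your argument is correct, and it reaches the conclusion by a genuinely different route than the paper. The paper's proof is shorter because it outsources the removability of $\Sigma_0$: since $b+n=4-a-n>2$, the thin set has zero weighted $H^{1,b}$-capacity, so \cite[Lemma 2.8]{CorFioVit25} together with the density of $C^\infty_c(\overline{B_1}\setminus\Sigma_0)$ in $H^{1,b}(B_1)$ reduces the whole lemma to checking that $w$ has finite $H^{1,b}(B_1)$-energy, which is then done exactly as in your last step (via $|y|^b=|y|^au_0^2$ and the Hardy inequality $\int|y|^{a-2}u^2\lesssim\int|y|^a|\D u|^2$, quoted there as \cite[Proposition 2.2]{CorFioVit25}). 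You instead prove the removability by hand: the cutoff $\eta_\varepsilon$, the cancellation of the cross term $\int|y|^a\phi\eta_\varepsilon\,\D u\cdot\D u_0$, and the annulus estimate giving the decay $\varepsilon^{1-(a+n)/2}$ of the error term. Your computation on the annulus is sound (both pieces $|y|^au_0\D u\cdot\D\eta_\varepsilon$ and $|y|^au\D u_0\cdot\D\eta_\varepsilon$ scale the same way after Cauchy--Schwarz and Hardy), and the finiteness of $\int|y|^b|\D w|^2$ needed for the dominated-convergence step is the same energy bound the paper proves. What your approach buys is self-containedness -- it does not rely on the capacity/density machinery of the companion paper, only on the Hardy inequality -- at the cost of length; what the paper's approach buys is that the superdegeneracy of $b$ does all the work of crossing $\Sigma_0$ in one stroke. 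One cosmetic remark: the Hardy inequality you need on $\tilde H^{1,a}(B_1)$ holds for all $a+n\neq 2$ (the constant degenerates only at the threshold), and the paper cites its own \cite[Proposition 2.2]{CorFioVit25} rather than \cite{CorMusNaz23} for it; this does not affect the validity of your argument.
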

\begin{proof}
The equation is trivially satisfied far from $\Sigma_0$ in a classic sense. Moreover, due to the superdegeneracy of the weight $|y|^b$, and combining \cite[Lemma 2.8]{CorFioVit25} with the density of $C^\infty_c(\overline {B_1}\setminus\Sigma_0)$ in $H^{1,b}(B_1)$, the result is trivially true if one shows that the $H^{1,b}(B_1)$-energy of $w$ is finite. Recalling the Hardy inequality (see \cite[Proposition 2.2]{CorFioVit25}), we easily compute that
    \begin{align*}
        &\int_{B_1}|y|^b |\D w|^2 \,dz\le c \int_{B_1}|y|^b \Big(\frac{|\D u|^2}{|u_0|^2} +
        \frac{u^2 |\D u_0|^2}{|u_0|^4} 
        \Big)\,dz\, \\
        &= c \Big(\int_{B_1}|y|^a |\D u|^2 \,dz +  \int_{B_1}|y|^{a-2}  u^2 \,dz
        \Big)
        \le c \int_{B_1}|y|^a |\D u|^2 \,dz\,.
    \end{align*}  
    Furthermore, $\|w\|_{L^{2,b}(B_1)} =\|u\|_{L^{2,a}(B_1)} $, hence $w \in H^{1,b}(B_1)$ and the proof is complete.    
    \end{proof}

Let us recall that the homogeneity degree appearing \eqref{alphastar2}, related to the new exponent $b$, is given by  
\begin{equation}\label{expblioville}
    \alpha_*(b,n) = \frac{2-b-n + \sqrt{(2-b-n)^2 + 4(n-1) }}{2}\,.
\end{equation}
Then, \cite[Theorems 1.1 and 1.2]{CorFioVit25} imply the following result.

\begin{Corollary}\label{C:Dirichlet}
    Let $a+n<2$ and $b=4-a-2n$. Let $\alpha_*(b,n)>0$ be the homogeneity exponent in \eqref{expblioville} and 
\begin{equation*}
       \alpha\in (0,1)\cap(0,\alpha_*(b,n)).
\end{equation*}
Let $u$ be a weak solution to \eqref{eq:Dirichlet:homogeneous} in $B_1$ and $u_0=|y|^{2-a-n}$.
Then, $u/u_0\in C^{0,\alpha}_{\rm loc}(B_{1})$ and there exists a constant $c>0$ depending only on $d$, $n$, $a$ and $\alpha$ such that
\begin{equation*}
\Big\|\frac{u}{u_0}\Big\|_{C^{0,\alpha}(B_{1/2})}\le c
\|u\|_{L^{2,a}(B_{1})}.
\end{equation*}
If moreover $\alpha_*(b,n)>1$ and
\begin{equation*}
       \alpha\in (0,1)\cap(0,\alpha_*(b,n)-1),
\end{equation*}
then $u/u_0\in C^{1,\alpha}_{\rm loc}(B_{1})$ and there exists a constant $c>0$ depending only on $d$, $n$, $a$ and $\alpha$ such that
\begin{equation*}
\Big\|\frac{u}{u_0}\Big\|_{C^{1,\alpha}(B_{1/2})}\le c
\|u\|_{L^{2,a}(B_{1})}.
\end{equation*}
\end{Corollary}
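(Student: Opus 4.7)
The plan is simply to apply the $C^{0,\alpha}$ and $C^{1,\alpha}$ regularity theorems of \cite{CorFioVit25} to $w = u/u_0$. By the preceding Lemma, $w \in H^{1,b}(B_1)$ with $b = 4-a-2n$ and $w$ is $L_b$-harmonic across $\Sigma_0$ in $B_1$ in the sense of Definition \ref{def:weak:sol:hom:0}. Since $b+n = 4-a-n > 2$, the exponent $b$ lies in the superdegenerate range, which is precisely the regime covered by those theorems; moreover, the formula for $\alpha_*(b,n)$ in \eqref{expblioville} is the one in \eqref{alphastar2} with $a$ replaced by $b$, i.e.\ the Liouville-type exponent attached to the equation satisfied by $w$.

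First I would invoke \cite[Theorem 1.1]{CorFioVit25} on $w$ to obtain $w \in C^{0,\alpha}_{\rm loc}(B_1)$ together with the estimate $\|w\|_{C^{0,\alpha}(B_{1/2})} \le c\,\|w\|_{L^{2,b}(B_1)}$ for every $\alpha \in (0,1)\cap(0,\alpha_*(b,n))$. Then, in the regime $\alpha_*(b,n)>1$, I would apply \cite[Theorem 1.2]{CorFioVit25} to get the analogous $C^{1,\alpha}$ estimate for $\alpha \in (0,1)\cap(0,\alpha_*(b,n)-1)$, with the $L^{2,b}$-norm of $w$ again on the right-hand side.

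To close the argument I would rewrite the $L^{2,b}$ norm of $w$ as the $L^{2,a}$ norm of $u$ via the pointwise identity $|w|^2|y|^b = |u|^2|y|^{b-2(2-a-n)} = |u|^2|y|^a$, so that $\|w\|_{L^{2,b}(B_1)} = \|u\|_{L^{2,a}(B_1)}$, matching the stated right-hand sides. No step here is really an obstacle: the only nontrivial ingredient, namely the fact that $w$ satisfies a genuine across-$\Sigma_0$ superdegenerate equation even though $u_0$ vanishes on $\Sigma_0$, has already been carried out in the preceding Lemma by combining the superdegeneracy of $|y|^b$ with the Hardy inequality. The present corollary is thus a direct transfer of the regularity estimates of \cite{CorFioVit25} from $w$ back to $u/u_0$.
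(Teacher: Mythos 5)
Your proposal is correct and matches the paper's argument exactly: the paper also deduces the corollary by applying \cite[Theorems 1.1 and 1.2]{CorFioVit25} to $w=u/u_0$, using the preceding Lemma to guarantee that $w$ is a weak solution of the superdegenerate equation across $\Sigma_0$ and that $\|w\|_{L^{2,b}(B_1)}=\|u\|_{L^{2,a}(B_1)}$.
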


\begin{remark}\label{rem:DFM}
The previous result improves \cite[Theorem 1]{Ngu11} in the flat case, which corresponds to a $L^\infty$-bound of $u/u_0$. Moreover, it slightly improve \cite[Theorems 1.2 and 1.3]{Fio24} in the present homogeneous case. The exponent $\alpha_*(b,n)$, given by \eqref{expblioville}, satisfies $\alpha_*(b,n)>2-a-n$ whenever
    \begin{equation}\label{codimcond}
        n-1>2(2-a-n)^2.
    \end{equation}
In particular this implies that, under \eqref{codimcond}, one can imply the sharp $C^{2-a-n}$ regularity of solutions to \eqref{eq:Dirichlet:homogeneous}. In fact, if $w=u/u_0$ has $C^{\beta}$ regularity with $\beta\geq 2-a-n$, then
\[
u=wu_0\in C^{2-a-n}.
\]
In particular, when $a+n=1$, which corresponds to the exponent studied by David, Feneuil and Mayboroda (see \cite{DavFenMay19} and its related works), one can prove the sharp Lipschitz continuity of solutions whenever the codimension $n\geq4$.

\end{remark}

\section*{Acknowledgment}
The authors are research fellows of Istituto Nazionale di Alta Matematica INDAM group GNAMPA and supported by the GNAMPA project E5324001950001 \emph{PDE ellittiche che degenerano su variet\`a di dimensione bassa e frontiere libere molto sottili}. G.C. is supported by the PRIN project 20227HX33Z \emph{Pattern formation in nonlinear phenomena}. S.V. is supported by the PRIN project 2022R537CS \emph{$NO^3$ - Nodal Optimization, NOnlinear elliptic equations, NOnlocal geometric problems, with a focus on regularity}.

\end{document}